\documentclass{amsart}

\usepackage[a4paper,margin=1.1in]{geometry}
\usepackage{amsmath,amssymb,amsthm,mathtools}
\usepackage{microtype}
\usepackage{hyperref}

\hypersetup{hidelinks}

\newtheorem{theorem}{Theorem}[section]
\newtheorem{proposition}[theorem]{Proposition}
\newtheorem{lemma}[theorem]{Lemma}
\newtheorem{corollary}[theorem]{Corollary}

\title{Five-Term and Higher Congruences Involving Arbitrary Sets\\
and Short Intervals Modulo a Prime}
\author{Yao Zhi}
\email{ericyao2026@gmail.com}
\date{}

\begin{document}

\maketitle

\begin{abstract}
	We obtain asymptotic formulas for additive congruences
	\[
	\sum_{i=1}^r m_i x_i^{-s}\equiv \lambda \pmod p,
	\]
	where the \(m_i\) range over arbitrary subsets of \(\mathbb F_p^\ast\) and the \(x_i\) over shifted intervals. For five terms, in the balanced case of common cardinality \(N\), the asymptotic holds uniformly in \(\lambda\) whenever
	\[
	N>p^{14/29+\varepsilon},
	\]
	giving a genuine sub-square-root range. The main input is a centered fourth-moment estimate for the associated double exponential sums. The same method yields sub-square-root thresholds for every fixed \(r\ge 5\), including \(N>p^{8/17+\varepsilon}\) for six terms, with
	\[
	\alpha_r=\frac13+\frac{4}{9\sqrt r}+O(r^{-1})
	\]
	as \(r\to\infty\).
\end{abstract}

\medskip
\noindent\textit{2020 Mathematics Subject Classification.}
Primary 11T23; Secondary 11D79, 11L07.

\smallskip
\noindent\textit{Keywords.}
Finite fields, exponential sums, short intervals, additive energy,
congruences, arbitrary sets.

\section{Introduction}

Let \(p\) be a sufficiently large prime, let \(\mathbb F_p\) denote
the field of \(p\) elements, and write
\[
\mathrm e_p(z)=\exp(2\pi iz/p).
\]
Throughout the paper, \(s\geq1\) is a fixed integer. For \(r\geq1\),
let
\[
\mathcal M_i\subseteq\mathbb F_p^\ast,
\qquad
|\mathcal M_i|=M,
\]
and let
\[
\mathcal X_i
=
L_i+\{1,\ldots,H\}
\subseteq\mathbb F_p^\ast,
\qquad
1\leq i\leq r.
\]
We denote by \(T_r(\lambda)\) the number of solutions to
\[
m_1x_1^{-s}+\cdots+m_rx_r^{-s}
\equiv\lambda\pmod p,
\qquad
m_i\in\mathcal M_i,\quad x_i\in\mathcal X_i.
\]
The expected main term is \((HM)^r/p\), and the problem is to obtain
an asymptotic formula uniformly in \(\lambda\), in the arbitrary sets
\(\mathcal M_i\), and in the positions of the intervals
\(\mathcal X_i\).

The study of multiplicative congruences with variables restricted to
short ranges is closely connected with mean values of character sums.
A classical result of Ayyad, Cochrane and Zheng \cite{ACZ} gives an
asymptotic formula for
\[
x_1x_2\equiv x_3x_4\pmod p
\]
in boxes and, equivalently, a fourth-moment estimate for multiplicative
character sums over intervals. Garaev and Karatsuba
\cite{GaraevKaratsuba05,GaraevKaratsuba07} subsequently developed
methods for multiplicative representation problems. A broad account
of modular hyperbolas and related distribution questions is given by
Shparlinski \cite{ShparlinskiHyperbolas}.

Several later developments are particularly relevant to the present
setting. Bourgain, Garaev, Konyagin and Shparlinski \cite{BGKS}
obtained strong bounds for multiplicative congruences with variables
from short intervals, while Bourgain and Garaev \cite{BG} developed
additive-structure estimates for reciprocal intervals and applications
to multilinear Kloosterman sums. Shparlinski
\cite{ShparlinskiRatios} studied linear congruences with ratios of
interval variables. Problems involving a short interval and a
completely arbitrary subset of \(\mathbb F_p^\ast\) were investigated
by Shkredov and Shparlinski \cite{ShkredovShparlinski}, Banks and
Shparlinski \cite{BanksShparlinski}, and Bag and Shparlinski
\cite{BagShparlinski}. Additive energies between a set and its
multiplicative dilates have also played an important role in
finite-field sum-product theory. The basic total-energy identity goes
back to Bourgain, Katz and Tao \cite{BKT}; average and structural
refinements were developed, among others, by Glibichuk
\cite{GlibichukEnergy} and Murphy and Petridis
\cite{MurphyPetridis}. Related recent developments on energies and
mean values of rational or modular-root exponential sums can be found
in \cite{KerrEtAl,KohShparlinski,XuZhang}.

The work most directly connected with our problem is that of Garaev
and Shparlinski \cite{GaraevShparlinski}. For an arbitrary set
\(\mathcal M\subseteq\mathbb F_p^\ast\) and an arbitrary shifted
interval \(\mathcal X\), they obtained pointwise estimates for double
exponential sums of the form
\[
\sum_{m\in\mathcal M}\sum_{x\in\mathcal X}
\mathrm e_p(amx^{-s})
\]
and used them, together with multiplicative collision estimates, to
study additive congruences involving the quantities \(mx^{-s}\).
Their six-term theorem gives, in the balanced case, a uniform
asymptotic formula when
\[
H=M>p^{17/35+\varepsilon}.
\]
They explicitly posed the problem of obtaining an asymptotic formula
for the five-term analogue when
\[
H=M<p^{1/2},
\]
noting that the problem was open even when all five numerator sets
coincide and all five interval shifts vanish; see
\cite[Section~1.2]{GaraevShparlinski}.

Garaev, Pardo and Shparlinski
\cite{GaraevPardoShparlinski} subsequently obtained a ternary
representation theorem at the exponent \(14/29\). More precisely,
for an arbitrary set \(\mathcal M\subseteq\mathbb F_p^\ast\) with
\[
|\mathcal M|=\lfloor p^{14/29}\rfloor
\]
and the initial interval of length
\[
H=\lfloor p^{14/29+\varepsilon}\rfloor,
\]
every residue class has a representation as a sum of three terms
\(m/x^s\). Their conclusion is a representability result rather than
an asymptotic formula for the number of representations. A recent preprint of Bag and Mazumder
\cite{BagMazumder} studies higher moments of related double
exponential sums. The fourth moment used below is different in that
the averaging variable is the additive frequency, which is precisely
the variable arising from the Fourier expansion of \(T_r(\lambda)\).

Our main observation is that the nonzero-frequency fourth moment
should be centered before the known information on additive energies
of multiplicative dilates is used. This leads to a substantially
sharper frequency-side fourth moment in the sub-square-root range.
For
\[
\mathcal M\subseteq\mathbb F_p^\ast,
\qquad
|\mathcal M|=M,
\]
and a shifted interval
\[
\mathcal X=L+\{1,\ldots,H\}\subseteq\mathbb F_p^\ast,
\]
define
\[
S_{\mathcal M,\mathcal X}(a)
=
\sum_{m\in\mathcal M}\sum_{x\in\mathcal X}
\mathrm e_p(amx^{-s})
\]
and put
\[
\mu=\min\{M,p^{1/2}\}.
\]

\begin{theorem}\label{thm:fourth-moment}
For every fixed \(s\geq1\),
\[
\sum_{a\in\mathbb F_p^\ast}
|S_{\mathcal M,\mathcal X}(a)|^4
\ll_s
pH^2M^2(H+\mu)^2p^{o(1)}.
\]
The estimate is uniform in \(\mathcal M\), in the position of
\(\mathcal X\), and in \(H\) and \(M\).
\end{theorem}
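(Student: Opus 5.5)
The plan is to reduce the centered fourth moment, by orthogonality and Cauchy--Schwarz, to a weighted average of the centered energies
\[
E^\ast(t)=E(\mathcal M,t\mathcal M)-\frac{M^4}{p},
\]
where \(E(\mathcal M,t\mathcal M)\) counts solutions of \(m_1-m_2=t(m_3-m_4)\) with \(m_i\in\mathcal M\). The weights will come from ratio collisions \((x_2/x_1)^s=t\) in the interval. I have only partly checked the bookkeeping; the final level-set optimisation (third paragraph) is not verified and I flag where it is tight.

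\emph{Reduction.} Put \(f(b)=\sum_{m\in\mathcal M}\mathrm e_p(bm)\), so that \(S_{\mathcal M,\mathcal X}(a)=\sum_{x\in\mathcal X}f(ax^{-s})\). Expanding the fourth power and summing over \(a\neq0\) gives
\[
\sum_{a\in\mathbb F_p^\ast}|S_{\mathcal M,\mathcal X}(a)|^4=\sum_{x_1,\dots,x_4\in\mathcal X}\Bigl(p\,J(u_1,u_2;u_3,u_4)-M^4\Bigr),
\qquad u_i=x_i^{-s}.
\]
Here \(J(u_1,u_2;u_3,u_4)\) is the number of solutions of \(u_1m_1+u_2m_2=u_3m_3+u_4m_4\) with \(m_i\in\mathcal M\). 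Let \(r_{12}(\lambda)\) count representations \(\lambda=u_1m_1+u_2m_2\), and define \(r_{34}\) similarly. Then \(p\,J-M^4=p\sum_\lambda\bigl(r_{12}(\lambda)-M^2/p\bigr)\bigl(r_{34}(\lambda)-M^2/p\bigr)\); this is the centering step. By Cauchy--Schwarz in \(\lambda\), the summand is at most
\[
p\,E^\ast(t_{12})^{1/2}E^\ast(t_{34})^{1/2},\qquad t_{12}=(x_1/x_2)^s,\quad t_{34}=(x_3/x_4)^s,
\]
since \(\sum_\lambda(r_{12}-M^2/p)^2=E^\ast(t_{12})\). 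The sum over \((x_1,x_2)\) and \((x_3,x_4)\) then factorises. A further Cauchy--Schwarz leaves
\[
\sum_{a\in\mathbb F_p^\ast}|S_{\mathcal M,\mathcal X}(a)|^4\le pH^2\sum_{t}w(t)E^\ast(t),
\qquad w(t)=\#\{(x_1,x_2)\in\mathcal X^2:(x_1/x_2)^s=t\}.
\]
It therefore suffices to prove \(\sum_t w(t)E^\ast(t)\ll M^2(H+\mu)^2p^{o(1)}\).

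\emph{Energy inputs.} I would use three bounds on \(E^\ast\).
\begin{enumerate}
\item[(i)] The Fourier identity \(E^\ast(t)=p^{-1}\sum_{a\neq0}|f(a)|^2|f(ta)|^2\) together with Parseval gives \(E^\ast(t)\le M^3\) pointwise.
\item[(ii)] The same identity gives \(\sum_{t\in\mathcal T}E^\ast(t)\le M^2\max_{a\neq0}\sum_{t\in\mathcal T}|f(ta)|^2\le pM^2\) for any \(\mathcal T\); this is the centered form of the Bourgain--Katz--Tao total-energy identity \cite{BKT}.
\item[(iii)] A Murphy--Petridis type bound \cite{MurphyPetridis}, \(\sum_{t\in\mathcal T}E^\ast(t)\ll M^3|\mathcal T|^{1/2}p^{o(1)}\), which I am assuming from that reference without having checked it.
\end{enumerate}
On the interval side, the diagonal \(t=1\) has weight \(w(1)\le sH\) and contributes at most \(sHM^3\). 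For the other values of \(t\), I would treat the \(s\)-th power via the divisor bound (at most \(s\) values of \(x_1/x_2\) per \(t\)). The key counting fact is that the ratios \(x_1/x_2\) of elements of a shifted interval occurring at least \(W\) times have small height. Writing \(x_1/x_2\equiv b/c\) forces \((b,c)\) to lie in a box of side \(\ll H/W\) up to \(p^{o(1)}\), so \(\#\{t:w(t)\ge W\}\ll (H/W)^2p^{o(1)}\). Together with the trivial bound \(w(t)\le sH\), this lets me split into dyadic levels \(w(t)\sim W\) and apply on each level the best of (i), (ii) and (iii).

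\emph{Main obstacle.} The step I expect to be hardest is the level-set optimisation, in particular the regime \(M>\sqrt p\) with \(HM>p\). There the diagonal term \(HM^3\) and the crude bound \(M^3\) on the levels with large \(W\) are too large compared with \(M^2(H+\mu)^2=M^2(H+\sqrt p)^2\). I would first try to fold the diagonal and the high-multiplicity levels into the total bound (ii), which is where \(\mu=\min\{M,p^{1/2}\}\) should come from, since for \(M\ge\sqrt p\) the total centered energy \(pM^2\) beats \(M^3\). If (ii) alone does not close this regime, I would add the complementary-set trick (replace \(\mathcal M\) by \(\mathbb F_p^\ast\setminus\mathcal M\), which leaves \(E^\ast\) essentially unchanged up to \(O(pM^2)\)-type terms). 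Checking that the small-height count of ratio collisions remains uniform in the shift \(L\) is the other point I would verify carefully.
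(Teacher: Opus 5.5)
\paragraph{Where the argument breaks.}
Your reduction is correct up to
\[
\sum_{a\in\mathbb F_p^\ast}|S_{\mathcal M,\mathcal X}(a)|^4\le p\Bigl(\sum_{x_1,x_2\in\mathcal X}E^\ast(t_{12})^{1/2}\Bigr)^2=p\Bigl(\sum_t w(t)E^\ast(t)^{1/2}\Bigr)^2.
\]
This is exactly the paper's starting point: your Cauchy--Schwarz in \(\lambda\) plus factorisation is its Minkowski step, with \(w=q_s\) and \(E^\ast=D_{\mathcal M}\).

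The gap is the ``further Cauchy--Schwarz'' \(\bigl(\sum_t wE^{\ast 1/2}\bigr)^2\le H^2\sum_t wE^\ast\). It loses a factor comparable to \(W\) on the level \(w\sim W\), hence up to \(H\) on the diagonal. As a result, the inequality you reduce to is false.

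Take \(\mathcal M=\{1,\dots,M\}\) with \(M=\lfloor p/2\rfloor\). There is no wrap-around, so \(E^\ast(1)\ge\tfrac23M^3-M^4/p\ge\tfrac16M^3\), while \(w(1)\ge H\). Hence \(\sum_t w(t)E^\ast(t)\gg HM^3\). For \(H\le p^{1/2}\) your target \(M^2(H+\mu)^2p^{o(1)}\) is at most \(4pM^2p^{o(1)}\). So it fails as soon as \(H\ge p^{\eta}\), by a factor of order \(HM/p\asymp H\).

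Consequently neither your energy inputs nor the complement trick can close the regime \(M>\sqrt p\); the loss occurs before any energy bound is used.

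For \(M\le\sqrt p\) your route stands or falls with (iii). The elementary bounds only give \(\min(WM^4,H^2M^3/W)\le HM^{7/2}\) on a level, which loses \(N^{1/2}\) in the balanced case. Moreover, (iii) as stated cannot hold. For \(\mathcal T=\mathbb F_p^\ast\) the left side is \(pM^2(1-M/p)^2\), which exceeds \(M^3p^{1/2+o(1)}\) when \(M\le p^{1/2-\eta}\). At the least it needs an extra \(|\mathcal T|M^2\) term, and even then it is an unproved incidence-type input.

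\paragraph{How to repair it.}
Keep the square root and apply Cauchy--Schwarz only inside each dyadic level \(\Lambda_W=\{t:W\le w(t)<2W\}\):
\[
\sum_{t\in\Lambda_W}w(t)E^\ast(t)^{1/2}\le 2W|\Lambda_W|^{1/2}\Bigl(\sum_{t\in\Lambda_W}E^\ast(t)\Bigr)^{1/2}.
\]
Your level count gives \(W|\Lambda_W|^{1/2}\ll Hp^{o(1)}\). The paper gets this from the Ayyad--Cochrane--Zheng fourth moment; your lattice argument is a valid alternative and is uniform in \(L\).

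Next, \(\sum_{t\in\Lambda_W}E^\ast(t)\le M^2(\mu^2+|\Lambda_W|)\):
\begin{itemize}
\item for \(M\le\sqrt p\), use the elementary count \(\sum_{t\in\Lambda}E(\mathcal M,t\mathcal M)\le M^4+|\Lambda|M^2\), since a quadruple with \(m_3\ne m_4\) determines \(t\);
\item for \(M>\sqrt p\), use your (ii).
\end{itemize}
Each level then contributes \(\ll(HM\mu+H^2M/W)p^{o(1)}\). Summing over the \(O(\log p)\) levels and squaring gives the theorem, with no use of (i) or (iii).

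\paragraph{Two further points.}
First, your level count is valid only for \(H\le p^{1/2}\). For \(\mathcal X=\mathbb F_p^\ast\) and \(s=1\), every \(t\ne0\) has \(w(t)=p-1\), contradicting \((H/W)^2\). So longer intervals need a separate step. The paper splits \(\mathcal X\) into \(\ll Hp^{-1/2}\) subintervals of length at most \(p^{1/2}\) and applies Minkowski in \(\ell^4\).

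Second, a minor slip: in (ii) the middle expression should be \(M\max_{a\ne0}\sum_{t\in\mathcal T}|f(ta)|^2\), because \(\sum_{a\ne0}|f(a)|^2\le pM\). The final bound \(pM^2\) is unaffected.
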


The main content of Theorem~\ref{thm:fourth-moment} lies in the range
\(H\leq p^{1/2}\). The extension to longer intervals, which we do not
attempt to optimize, follows by subdividing the interval and is
included only to avoid a separate large-\(N\) argument later.

In particular, if \(H,M\leq p^{1/2}\), then
\[
\sum_{a\in\mathbb F_p^\ast}
|S_{\mathcal M,\mathcal X}(a)|^4
\ll_s
pH^2M^2(H+M)^2p^{o(1)},
\]
and in the balanced case \(H=M=N\leq p^{1/2}\),
\[
\sum_{a\in\mathbb F_p^\ast}
|S_{\mathcal M,\mathcal X}(a)|^4
\ll_s
pN^6p^{o(1)}.
\]
This is the scale needed to pass below the square-root barrier in the
five-term problem.

The point of Theorem~\ref{thm:fourth-moment} is the centering. If
\[
r_{\mathcal M,\mathcal X}(t)
=
\#\{(m,x)\in\mathcal M\times\mathcal X:mx^{-s}=t\},
\]
then removal of the zero additive frequency leads to the exact
variance of the additive correlation of
\(r_{\mathcal M,\mathcal X}\). Decomposing this centered correlation
according to the ratio \((x/y)^s\) reduces the problem to an
interaction between the ratio multiplicity of the interval and the
centered additive energy
\[
E^+(\mathcal M,\lambda\mathcal M)-\frac{M^4}{p}.
\]
The dilate-energy estimates used at this point are classical; see
\cite{BKT,MurphyPetridis}. The gain comes from coupling them, on each
dyadic level, with the second moment of the interval-ratio
multiplicity instead of applying Cauchy--Schwarz globally.

Our first arithmetic consequence answers the five-term question in a
power range strictly below \(p^{1/2}\).

\begin{theorem}\label{thm:five-main}
Fix \(s\geq1\) and \(\varepsilon>0\). For \(1\leq i\leq5\), let
\[
\mathcal M_i\subseteq\mathbb F_p^\ast,
\qquad
|\mathcal M_i|=M,
\]
and let
\[
\mathcal X_i
=
L_i+\{1,\ldots,H\}
\subseteq\mathbb F_p^\ast.
\]
Assume that \(H\leq p^{1/2}\) and
\[
H^{27}M^{26}
>
p^{14+\varepsilon}
\left(H+\min\{M,p^{1/2}\}\right)^{24}.
\]
Then there exists \(\delta=\delta(s,\varepsilon)>0\) such that,
uniformly for \(\lambda\in\mathbb F_p\),
\[
T_5(\lambda)
=
\frac{H^5M^5}{p}
\left(1+O(p^{-\delta})\right).
\]
\end{theorem}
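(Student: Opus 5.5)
The plan is the standard Fourier-analytic reduction, with Theorem~\ref{thm:fourth-moment} controlling four of the five factors and a pointwise bound controlling the fifth. By orthogonality of additive characters,
\[
T_5(\lambda)=\frac{H^5M^5}{p}+\frac1p\sum_{a\in\mathbb F_p^\ast}\mathrm e_p(-a\lambda)\prod_{i=1}^5 S_{\mathcal M_i,\mathcal X_i}(a),
\]
since the frequency \(a=0\) contributes exactly \((HM)^5/p\). So it suffices to show that the error sum \(R\) satisfies \(|R|\le H^5M^5p^{-\delta}\) uniformly in \(\lambda\).

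To bound \(R\), I pull out the fifth factor in sup norm and apply H\"older with four exponents \(4\) to the remaining four factors:
\[
|R|\le \max_{a\ne0}|S_{\mathcal M_5,\mathcal X_5}(a)|\;\prod_{i=1}^4\Bigl(\sum_{a\ne0}|S_{\mathcal M_i,\mathcal X_i}(a)|^4\Bigr)^{1/4}.
\]
Theorem~\ref{thm:fourth-moment} applies to each pair \((\mathcal M_i,\mathcal X_i)\) uniformly in the shifts \(L_i\). Each fourth-root factor is therefore \(\ll (pH^2M^2(H+\mu)^2)^{1/4}p^{o(1)}\), where \(\mu=\min\{M,p^{1/2}\}\). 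Hence
\[
|R|\ll p\,H^2M^2(H+\mu)^2\,\max_{a\ne0}|S_{\mathcal M_5,\mathcal X_5}(a)|\,p^{o(1)}.
\]
Comparing with the main term, the claim reduces to
\[
\max_{a\ne0}|S_{\mathcal M,\mathcal X}(a)|\le \frac{H^3M^3}{p\,(H+\mu)^2}\,p^{-2\delta}.
\]

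For the pointwise input I would invoke the Garaev--Shparlinski bound for the double sums \(S_{\mathcal M,\mathcal X}(a)\) with \(a\ne0\) and \(H\le p^{1/2}\), uniform in the shift; this is where the hypothesis \(H\le p^{1/2}\) enters. What is needed is a bound of the shape
\[
|S_{\mathcal M,\mathcal X}(a)|\ll p^{1/6}H^{3/4}M^{5/6}p^{o(1)},
\]
since raising the required inequality to the twelfth power turns it into
\[
p^{2}H^{9}M^{10}\cdot p^{12}(H+\mu)^{24}<H^{36}M^{36}p^{-24\delta+o(1)},
\]
which is exactly \(H^{27}M^{26}>p^{14+\varepsilon}(H+\mu)^{24}\) once \(\delta\) is chosen small in terms of \(\varepsilon\). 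The \(p^{o(1)}\) losses are absorbed into \(p^{\varepsilon}\). As a consistency check, at \(H=M=p^{1/2}\) this bound gives \(p^{23/24}\), below the trivial \(p\), so it is at least plausible.

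I expect the main obstacle to be this pointwise step. One has to identify precisely which form of the Garaev--Shparlinski estimate (possibly after optimizing a parameter in a Vinogradov-type or multiplicative-energy argument) delivers the exponents \((1/6,3/4,5/6)\). One must also check that it holds uniformly in \(L\) and for every fixed \(s\), including the unbalanced ranges where \(M\) may exceed \(p^{1/2}\).

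If the cited bound comes instead as a minimum of several expressions, I would split into ranges of \(M\) relative to \(H\) and \(p^{1/2}\) and verify the twelfth-power inequality in each range. The fourth-moment part is immediate from Theorem~\ref{thm:fourth-moment}.
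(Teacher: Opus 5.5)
Your proposal is correct and is essentially the paper's proof: H\"older with Theorem~\ref{thm:fourth-moment} on four factors and a sup-norm bound on the fifth (Proposition~\ref{prop:master-error} with \(r=5\), \(\ell=3\)). The pointwise input you were unsure about is Lemma~\ref{lem:pointwise} with \(\ell=3\). Since \(H\le p^{1/2}\) gives \(p/(MH^{3/2})\ge 1/M\), that lemma yields exactly \(|S_{\mathcal M,\mathcal X}(a)|\ll p^{1/6}H^{3/4}M^{5/6}p^{o(1)}\) for every \(M\), uniformly in the shift, so no splitting into ranges of \(M\) is needed.
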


\begin{corollary}\label{cor:five-balanced}
Fix \(s\geq1\) and \(\varepsilon>0\). If
\[
H=M=N>p^{14/29+\varepsilon},
\]
then, for some \(\delta=\delta(s,\varepsilon)>0\),
\[
T_5(\lambda)
=
\frac{N^{10}}{p}
\left(1+O(p^{-\delta})\right)
\]
uniformly for \(\lambda\in\mathbb F_p\).
\end{corollary}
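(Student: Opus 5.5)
The corollary is the balanced specialization of Theorem~\ref{thm:five-main}. The plan is to set \(H=M=N\) and check the hypotheses of that theorem. Two ranges of \(N\) have to be treated separately.

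\emph{Range \(p^{14/29+\varepsilon}<N\leq p^{1/2}\).} Here \(H=N\leq p^{1/2}\), and \(\min\{M,p^{1/2}\}=N\). Hence \(H+\min\{M,p^{1/2}\}=2N\), and the hypothesis of Theorem~\ref{thm:five-main} becomes
\[
N^{53}>p^{14+\varepsilon'}\,2^{24}N^{24},
\qquad\text{that is,}\qquad
N^{29}>2^{24}p^{14+\varepsilon'}.
\]
I will apply the theorem with \(\varepsilon'=\varepsilon\) in place of its \(\varepsilon\). Since \(N>p^{14/29+\varepsilon}\), we have \(N^{29}>p^{14+29\varepsilon}\). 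This exceeds \(2^{24}p^{14+\varepsilon}\) once \(p\) is large, because \(28\varepsilon>0\) absorbs the constant \(2^{24}\). The theorem then gives \(T_5(\lambda)=N^{10}p^{-1}(1+O(p^{-\delta}))\) uniformly in \(\lambda\). The finitely many small primes are handled by adjusting the implied constant.

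\emph{Range \(N>p^{1/2}\).} This is the main obstacle, because Theorem~\ref{thm:five-main} assumes \(H\leq p^{1/2}\). I would reduce to the first range by subdivision, since \(T_5\) is additive in the variables.

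Partition each \(\mathcal X_i\) into about \(N/H_0\) consecutive shifted intervals of length \(H_0=\lfloor p^{1/2-\eta}\rfloor\), plus one shorter remainder interval. Likewise partition each \(\mathcal M_i\) arbitrarily into blocks of size \(M_0=H_0\), plus a remainder block. Here \(\eta>0\) is small, chosen so that \(H_0>p^{14/29+\varepsilon/2}\); for instance \(\eta<1/58-\varepsilon/2\) works, assuming \(\varepsilon<1/29\) without loss of generality.

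For each choice of full blocks, the first range (with \(\varepsilon/2\)) gives
\[
T=\frac{H_0^{5}M_0^{5}}{p}\bigl(1+O(p^{-\delta})\bigr),
\]
uniformly in \(\lambda\), the shifts, and the sets. The theorem allows unequal shifts and arbitrary sets, so it applies to every block combination. Summing over all full-block combinations produces the main term for the union of the full blocks, with relative error \(O(p^{-\delta})\).

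The remainder pieces must then be controlled. Any tuple of solutions involving a remainder piece can be padded: extend the short remainder interval (or remainder set) to a full block by adding extra elements. This only increases the count, so the first range applied to the padded blocks gives an upper bound of the expected size for these contributions. The remainders occupy a proportion \(O(H_0/N)\) of each variable range, which is at most \(p^{-\eta'}\) for some \(\eta'>0\) when \(N>p^{1/2}\). So the remainder contributions are \(O\bigl(N^{10}p^{-1}\cdot p^{-\eta'}\bigr)\).

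One technical point needs care. The padded blocks must stay inside \(\mathbb F_p^\ast\): intervals should avoid \(0\), and sets can be padded from the complement, which is possible when \(N<p\) is not too close to \(p\). If \(N\) is comparable to \(p\), I would instead argue directly by the Fourier expansion of \(T_5\), using Theorem~\ref{thm:fourth-moment}.

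Combining the full blocks and the remainders yields the corollary with \(\delta\) replaced by \(\min\{\delta,\eta'\}\).
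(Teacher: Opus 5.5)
Your proof is correct. On the range $N\le p^{1/2}$ it is the paper's argument: the paper quotes the bound $p^{7/6+o(1)}N^{-29/12}$ from the proof of Theorem~\ref{thm:five-main}, while you check the hypothesis of its statement, which amounts to the same thing.

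For $N>p^{1/2}$ you take a genuinely different route. The paper does not subdivide $T_5$ at all. It observes that Proposition~\ref{prop:master-error} carries no restriction on $H$: Theorem~\ref{thm:fourth-moment} has already been extended to $H>p^{1/2}$, and Lemma~\ref{lem:pointwise} holds for all $H$. Taking $\mu=p^{1/2}\le N$, \eqref{eq:relative-master} then gives a relative error
\[
\ll \bigl(p^{7/6}N^{-29/12}+pN^{-13/6}\bigr)p^{o(1)}\le p^{-1/24+o(1)}.
\]

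You instead split the count into block counts and reduce the long range to the sub-square-root balanced case. This uses only Theorem~\ref{thm:five-main} as a black box and never needs the fourth moment or the pointwise bound for long intervals. The cost is the remainder bookkeeping, and a saving $\min\{\delta,\eta\}$ with $\eta<1/58-\varepsilon/2$ in place of the explicit $1/24$. The paper's extension of Theorem~\ref{thm:fourth-moment} to $H>p^{1/2}$ is also a subdivision, but it is done on $L^4$ norms via Minkowski rather than on the count itself. Your block bookkeeping checks out:
\begin{itemize}
\item at most $O((N/H_0)^9)$ block combinations involve a remainder piece;
\item each such combination contributes $\ll H_0^{10}/p$, for a total of $\ll N^{10}p^{-1}\cdot H_0/N\le N^{10}p^{-1-\eta}$;
\item the full-block main term satisfies $(\lfloor N/H_0\rfloor H_0)^{10}=N^{10}(1+O(H_0/N))$.
\end{itemize}

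The ``technical point'' at the end is not actually an obstacle, and the unspecified Fourier fallback for $N$ close to $p$ is unnecessary. Pad the remainder of $\mathcal M_i$ with other elements of $\mathcal M_i$ itself. Replace the remainder interval by the last $H_0$ consecutive elements of $\mathcal X_i$, which form a shifted subinterval containing it. Both moves are possible because $N>H_0$, and both keep you inside $\mathbb F_p^\ast$. The upper bound only needs each padded piece to contain the original piece; it does not need the padded pieces to be disjoint from the other blocks. So no case split is required.
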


Since
\[
\frac{14}{29}=0.482758\ldots<\frac12,
\]
Corollary~\ref{cor:five-balanced} gives a five-term asymptotic formula
in a genuine sub-square-root range. To the best of our knowledge, this is the first asymptotic formula
for the five-term problem of \cite{GaraevShparlinski} in a power range
strictly below the square-root threshold. The sets
\(\mathcal M_1,\ldots,\mathcal M_5\) and the interval shifts
\(L_1,\ldots,L_5\) are allowed to be different.

The same fourth moment applies systematically to any fixed number
\(r\geq5\) of summands. For integers \(r\geq5\) and \(\ell\geq1\),
put
\[
\alpha_{r,\ell}
=
\frac{(\ell+1)(2\ell+r-4)}
{4\ell^2+(3r-8)\ell+r-4},
\]
and define
\[
\ell_r
=
\left\lceil
\frac{1+\sqrt{4r-7}}2
\right\rceil,
\qquad
\alpha_r=\alpha_{r,\ell_r}.
\]
When
\[
r=\ell_r^2-\ell_r+2,
\]
the parameters \(\ell_r\) and \(\ell_r+1\) give the same value of
\(\alpha_{r,\ell}\).

\begin{theorem}\label{thm:r-balanced}
Fix \(r\geq5\), \(s\geq1\) and \(\varepsilon>0\). Suppose that
\[
|\mathcal M_i|=|\mathcal X_i|=N,
\qquad
1\leq i\leq r,
\]
where the \(\mathcal M_i\subseteq\mathbb F_p^\ast\) are arbitrary and
the \(\mathcal X_i\subseteq\mathbb F_p^\ast\) are shifted intervals.
If
\[
N>p^{\alpha_r+\varepsilon},
\]
then there exists \(\delta=\delta(s,r,\varepsilon)>0\) such that
\[
T_r(\lambda)
=
\frac{N^{2r}}{p}
\left(1+O(p^{-\delta})\right)
\]
uniformly for \(\lambda\in\mathbb F_p\). Moreover,
\[
\alpha_r
=
\frac13+\frac{4}{9\sqrt r}+O(r^{-1})
\]
as \(r\to\infty\).
\end{theorem}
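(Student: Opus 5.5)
The plan is the standard Fourier-analytic route, with Theorem~\ref{thm:fourth-moment} supplying the moment input and a pointwise bound supplying the rest. By orthogonality,
\[
T_r(\lambda)=\frac{N^{2r}}{p}+\frac1p\sum_{a\in\mathbb F_p^\ast}\mathrm e_p(-a\lambda)\prod_{i=1}^r S_{\mathcal M_i,\mathcal X_i}(a),
\]
so it suffices to show that the error sum is \(O(N^{2r}p^{-\delta})\) uniformly in \(\lambda\). Write \(S_i=S_{\mathcal M_i,\mathcal X_i}\).

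\textbf{Step 1: reduction to a fourth moment and a sup-norm.} First I would apply Hölder's inequality in \(a\) with \(r\) equal exponents:
\[
\sum_{a\ne0}\prod_{i}|S_i(a)|\le\prod_i\Bigl(\sum_{a\ne0}|S_i(a)|^r\Bigr)^{1/r}.
\]
This reduces the problem to a single pair \((\mathcal M,\mathcal X)\). For such a pair I would use
\[
\sum_{a\ne0}|S(a)|^r\le \max_{a\ne0}|S(a)|^{\,r-4}\sum_{a\ne0}|S(a)|^4 .
\]
Theorem~\ref{thm:fourth-moment} in the balanced case gives \(\sum_{a\ne0}|S(a)|^4\ll pN^6p^{o(1)}\) when \(N\le p^{1/2}\). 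When \(N>p^{1/2}\) it gives the weaker but still sufficient bound \(pN^4(N+p^{1/2})^2p^{o(1)}\), so no separate large-\(N\) argument is needed. The error term is therefore at most \(N^6p^{o(1)}\max_{a\ne0}|S(a)|^{r-4}\). The asymptotic formula follows as soon as
\[
\max_{a\ne0}|S(a)|\le N^{(2r-6)/(r-4)}p^{-1/(r-4)-\eta}
\]
for some \(\eta>0\).

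\textbf{Step 2: a pointwise bound with parameter \(\ell\).} Next I would bound \(S(a)\) pointwise with a free integer parameter \(\ell\ge1\). I would apply Hölder in \(m\) with exponent \(2\ell\), extend the sum over \(m\) to all of \(\mathbb F_p\), and expand. This gives
\[
|S(a)|^{2\ell}\le M^{2\ell-1}\,p\,J_\ell,
\]
where \(J_\ell\) counts solutions of \(x_1^{-s}+\dots+x_\ell^{-s}=x_{\ell+1}^{-s}+\dots+x_{2\ell}^{-s}\) with \(x_j\in\mathcal X\). For \(J_\ell\) I would invoke the estimates for sums of reciprocal powers of short-interval elements underlying the pointwise bounds of \cite{GaraevShparlinski} (in the spirit of \cite{BG}). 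If that form is not directly adequate, I would instead apply Hölder in \(x\) and use the dual count over \(\mathcal M\). The outcome I expect is a bound of the shape
\[
|S(a)|\ll N^{2-\theta_\ell}p^{\kappa_\ell+o(1)}
\]
with explicit rational \(\theta_\ell,\kappa_\ell\).

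\textbf{Step 3: optimization.} Inserting the Step 2 bound into the condition of Step 1 and solving for \(N\) yields a threshold \(N>p^{\alpha_{r,\ell}+\varepsilon}\). The step I expect to be the main obstacle is getting the pointwise bound in exactly the form that produces the stated \(\alpha_{r,\ell}\). As a consistency check I would confirm that the case \(r=5\) with \(\ell_5=3\) recovers \(14/29\). Minimizing the rational function \(\alpha_{r,\ell}\) in \(\ell\), the comparison \(\alpha_{r,\ell}\le\alpha_{r,\ell+1}\) reduces to a quadratic inequality in \(\ell\). Its critical point gives \(\ell_r=\lceil(1+\sqrt{4r-7})/2\rceil\), with a tie exactly when \(r=\ell^2-\ell+2\). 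Finally, substituting \(\ell\approx\sqrt r\) and expanding,
\[
\alpha_{r,\ell}=\frac{(\ell+1)(2\ell+r-4)}{4\ell^2+3r\ell+\dots}\sim\frac{r\ell+2\ell^2+r}{3r\ell+4\ell^2+r},
\]
which gives \(\alpha_r=\tfrac13+\tfrac{4}{9\sqrt r}+O(r^{-1})\) by a routine Taylor expansion.
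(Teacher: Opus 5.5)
Your architecture is the paper's. Hölder together with Theorem~\ref{thm:fourth-moment} reduces the error to $N^6p^{o(1)}\max_{a\neq0}|S(a)|^{r-4}$, you then insert a pointwise bound with parameter $\ell$ and optimize in $\ell$. Your Step~1 condition, your optimization in $\ell$ and your asymptotic expansion are correct.

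\textbf{The gap is Step~2,} which is exactly where $\alpha_{r,\ell}$ is decided. You never state a bound for $J_\ell$ or produce $\theta_\ell,\kappa_\ell$, and you say yourself that you do not know whether the outcome gives $\alpha_{r,\ell}$. So Step~3 cannot be carried out, and the stated threshold is not derived. The missing input is Lemma~\ref{lem:pointwise}, i.e.\ \cite[Theorem~1.2]{GaraevShparlinski}:
\[
|S(a)|\ll HM\Bigl(\frac{p}{MH^{2\ell/(\ell+1)}}+\frac1M\Bigr)^{1/(2\ell)}p^{o(1)},
\]
uniformly in the shift of $\mathcal X$.

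Your Hölder step $|S(a)|^{2\ell}\le M^{2\ell-1}pJ_\ell$ is correct. It reproduces this bound exactly if
\[
J_\ell\ll\bigl(H^{2\ell^2/(\ell+1)}+H^{2\ell}/p\bigr)p^{o(1)}
\]
uniformly in the shift and for general $s$. That is the precise count you would have to cite, and it should not be assumed without a reference for shifted intervals. Quoting the lemma directly, as the paper does, is the clean repair.

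With it, for $N\le p^{(\ell+1)/(2\ell)}$ you get
\[
\theta_\ell=\frac{3\ell+1}{2\ell(\ell+1)},\qquad \kappa_\ell=\frac1{2\ell}.
\]
Your Step~1 condition then becomes $N^{2+\theta_\ell(r-4)}>p^{1+(r-4)/(2\ell)+\eta'}$, which is exactly $N>p^{\alpha_{r,\ell}+\varepsilon}$. For $r=5$, $\ell=3$ this reads $N^{29/12}>p^{7/6}$.

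Two smaller points.

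\emph{The fallback fails.} Hölder in $x$ with a count over $\mathcal M$ cannot work, because $\mathcal M$ is arbitrary. Its $\ell$-fold additive energy admits only the trivial bound $M^{2\ell-1}$, which gives $|S(a)|\ll HM\,(p/(HM))^{1/(2\ell)}$. That is worse than the trivial bound $HM$ whenever $HM<p$, i.e.\ in the whole sub-square-root range.

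\emph{Large $N$ still needs a check.} ``Solving for $N$'' only covers the range where the first term of the pointwise bound dominates. The uniformity of the fourth moment does not remove the short check, done in the paper, that for $N\ge p^{1/2}$ the relative error
\[
pN^{-2}\bigl(p/N^{1+\theta}+1/N\bigr)^{(r-4)/(2\ell_r)},\qquad \theta=2\ell_r/(\ell_r+1)>1,
\]
still gives a power saving.
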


For reference, the first few values are
\[
\begin{array}{c|c|c}
	r & \ell_{\mathrm{opt}} & \alpha_r \\ \hline
	5 & 3   & 14/29\\
	6 & 3   & 8/17\\
	7 & 3   & 6/13\\
	8 & 3,4 & 5/11\\
	9 & 4   & 13/29\\
	10& 4   & 35/79
\end{array}
\]
and, in particular,
\[
\alpha_6=\frac8{17}<\frac{17}{35}.
\]
Thus Theorem~\ref{thm:r-balanced} improves the balanced six-term
threshold of Garaev and Shparlinski
\cite[Theorem~1.3]{GaraevShparlinski}.

Moreover,
\[
\alpha_r
\leq
\alpha_{r,3}
=
\frac{2r+4}{5r+4}
<
\frac12
\qquad (r\geq5),
\]
so Theorem~\ref{thm:r-balanced} gives a genuine sub-square-root
asymptotic range for every fixed \(r\geq5\).

The proof has two principal analytic inputs. The first is
Theorem~\ref{thm:fourth-moment}; the second is the pointwise estimate
of Garaev and Shparlinski recorded in
Lemma~\ref{lem:pointwise}. Section~\ref{sec:auxiliary} collects the
auxiliary estimates. The centered fourth moment is proved in
Section~\ref{sec:fourth}. In Section~\ref{sec:congruences} we derive
a general Fourier error estimate and prove
Theorem~\ref{thm:five-main} and Corollary~\ref{cor:five-balanced}.
The balanced \(r\)-term threshold is optimized in
Section~\ref{sec:optimization}.

All implied constants below may depend on the fixed parameters
\(s,r,\ell\), and on \(\varepsilon\) when it occurs, but not on \(p\),
the sets, the interval shifts, \(\lambda\), or the nonzero frequencies.
Every occurrence of \(p^{o(1)}\) is uniform in these varying objects:
for each fixed \(\eta>0\), it may be replaced by \(O(p^\eta)\), with
an implied constant depending only on \(\eta\) and the fixed
parameters.

\paragraph{\textbf{The role of AI tools}}

During the preparation of this work, the author used ChatGPT 5.5 (Plus) as an auxiliary research tool. In particular, reference \cite{ACZ} was included following a suggestion made by the AI tool, and parts of the proof strategy for Theorem 1.1 were developed with partial inspiration from AI-assisted discussions. AI tools were also used to assist with some computations, algebraic manipulations, and verification steps in other parts of the manuscript. All mathematical arguments, references, computations, and conclusions appearing in the final manuscript were independently checked and verified by the author, who takes full responsibility for the content of the paper.

\section{Auxiliary estimates}\label{sec:auxiliary}

Throughout this section,
\[
\mathcal X=L+\{1,\ldots,H\}\subseteq\mathbb F_p^\ast,
\qquad
\mathcal M\subseteq\mathbb F_p^\ast,
\qquad
|\mathcal M|=M.
\]
For \(\lambda\in\mathbb F_p^\ast\), define
\[
q_s(\lambda)
=
\#\left\{
(x,y)\in\mathcal X^2:(x/y)^s=\lambda
\right\}.
\]

\begin{lemma}\label{lem:ratio-moment}
Let \(s\geq1\) be fixed. If \(H\leq p^{1/2}\), then
\[
\sum_{\lambda\in\mathbb F_p^\ast}q_s(\lambda)^2
\ll_s H^2p^{o(1)}.
\]
\end{lemma}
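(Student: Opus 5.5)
The plan is to turn the second moment of \(q_s\) into a count of solutions of a multiplicative congruence in the interval \(\mathcal X\), and then to bound that count by the fourth moment of multiplicative character sums over intervals, as in Ayyad, Cochrane and Zheng \cite{ACZ}.

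First, note that
\[
\sum_{\lambda\in\mathbb F_p^\ast}q_s(\lambda)^2
=\#\{(x_1,y_1,x_2,y_2)\in\mathcal X^4:(x_1y_2)^s=(x_2y_1)^s\}.
\]
This uses \(\mathcal X\subseteq\mathbb F_p^\ast\), so all the ratios are defined. The condition \((x_1y_2)^s=(x_2y_1)^s\) means \(x_1y_2=\zeta x_2y_1\) for some \(\zeta\) with \(\zeta^s=1\). There are at most \(s\) such \(\zeta\) in \(\mathbb F_p^\ast\). It therefore suffices to show, uniformly in \(\zeta\in\mathbb F_p^\ast\), that
\[
J(\zeta)=\#\{(x_1,x_2,y_1,y_2)\in\mathcal X^4:x_1y_2=\zeta x_2y_1\}\ll H^2p^{o(1)}.
\]

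Next, I remove the twist \(\zeta\) using multiplicative characters. Put \(S_\chi=\sum_{x\in\mathcal X}\chi(x)\). Orthogonality gives
\[
J(\zeta)=\frac1{p-1}\sum_{\chi}\overline\chi(\zeta)\,|S_\chi|^4\le\frac1{p-1}\sum_\chi|S_\chi|^4=J(1).
\]
So only the untwisted count \(J(1)\) for a shifted interval needs to be handled.

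The key input, and the only nontrivial step, is the classical fourth-moment bound for character sums over an arbitrary interval of length \(H\) (Ayyad--Cochrane--Zheng \cite{ACZ}):
\[
\sum_\chi|S_\chi|^4\ll pH^2(\log p)^{2}+H^4.
\]
Equivalently, \(J(1)\ll H^4/p+H^2p^{o(1)}\). I would state this in the form valid for every shift \(L\), since \(\mathcal X\) is an arbitrary translate. If the cited form covered only initial intervals, I would fall back on its box version, which is translation-uniform, or on the Bourgain--Garaev--Konyagin--Shparlinski bound \cite{BGKS}.

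Given this bound, the hypothesis \(H\le p^{1/2}\) yields \(H^4/p\le H^2\). Hence \(J(\zeta)\le J(1)\ll H^2p^{o(1)}\) for every \(\zeta\). Summing over the at most \(s\) admissible values of \(\zeta\) gives the lemma.

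The main obstacle is making sure the quoted fourth-moment estimate really holds uniformly in the shift \(L\). Everything else is bookkeeping.
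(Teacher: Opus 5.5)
Your proof is correct and is essentially the paper's argument. Both use multiplicative-character orthogonality to reduce the second moment of \(q_s\) to the Ayyad--Cochrane--Zheng fourth moment, absorbing the principal character via \(H^4\le pH^2\); the only cosmetic difference is that you handle the \(s\)-th powers by splitting into at most \(s\) twisted counts \(J(\zeta)\), each bounded by \(J(1)\) via positivity, whereas the paper uses that \(\chi\mapsto\chi^s\) has fibres of size at most \(s\).
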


\begin{proof}
Let \(\widehat{\mathbb F_p^\ast}\) denote the multiplicative character
group of \(\mathbb F_p^\ast\). By multiplicative-character
orthogonality,
\[
\sum_{\lambda\in\mathbb F_p^\ast}q_s(\lambda)^2
=
\frac{1}{p-1}
\sum_{\chi\in\widehat{\mathbb F_p^\ast}}
\left|
\sum_{x\in\mathcal X}\chi^s(x)
\right|^4.
\]
The homomorphism \(\chi\mapsto\chi^s\) has fibres of cardinality at
most \(s\). Hence
\[
\sum_{\chi\in\widehat{\mathbb F_p^\ast}}
\left|
\sum_{x\in\mathcal X}\chi^s(x)
\right|^4
\leq
s
\sum_{\psi\in\widehat{\mathbb F_p^\ast}}
\left|
\sum_{x\in\mathcal X}\psi(x)
\right|^4.
\]
Let \(\psi_0\) denote the principal multiplicative character.
The fourth-moment estimate of Ayyad, Cochrane and Zheng
\cite{ACZ}, uniformly for intervals of arbitrary position, gives
\[
\sum_{\substack{\psi\in\widehat{\mathbb F_p^\ast}\\
\psi\neq\psi_0}}
\left|
\sum_{x\in\mathcal X}\psi(x)
\right|^4
\ll pH^2(\log p)^2.
\]
The principal character contributes \(H^4\), which is \(O(pH^2)\)
when \(H\leq p^{1/2}\). The result follows.
\end{proof}

We also use the following pointwise estimate of Garaev and
Shparlinski \cite[Theorem~1.2]{GaraevShparlinski}.

\begin{lemma}\label{lem:pointwise}
Let \(s,\ell\geq1\) be fixed. For every \(a\in\mathbb F_p^\ast\),
\[
\left|
\sum_{m\in\mathcal M}\sum_{x\in\mathcal X}
\mathrm e_p(amx^{-s})
\right|
\ll_{s,\ell}
HM
\left(
\frac{p}{MH^{2\ell/(\ell+1)}}+\frac1M
\right)^{1/(2\ell)}
p^{o(1)}.
\]
\end{lemma}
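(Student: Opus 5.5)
This lemma is \cite[Theorem~1.2]{GaraevShparlinski}. In the paper it is simply quoted, but we sketch how we would prove it. The plan is to use H\"older's inequality in the variable \(m\), complete the sum over \(m\) to all of \(\mathbb F_p\), and thereby reduce the estimate to a bound for the number of solutions of a symmetric congruence in the interval variables.

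First, fix \(a\in\mathbb F_p^\ast\) and write \(S=S_{\mathcal M,\mathcal X}(a)\). By the triangle inequality and H\"older's inequality with exponent \(2\ell\),
\[
|S|^{2\ell}
\leq
M^{2\ell-1}\sum_{m\in\mathcal M}
\Bigl|\sum_{x\in\mathcal X}\mathrm e_p(amx^{-s})\Bigr|^{2\ell}
\leq
M^{2\ell-1}\sum_{u\in\mathbb F_p}
\Bigl|\sum_{x\in\mathcal X}\mathrm e_p(ux^{-s})\Bigr|^{2\ell}.
\]
The second step uses positivity and the fact that \(m\mapsto am\) is injective. Additive orthogonality shows that the last sum equals \(pJ_\ell\), where \(J_\ell\) is the number of solutions of
\[
x_1^{-s}+\cdots+x_\ell^{-s}\equiv x_{\ell+1}^{-s}+\cdots+x_{2\ell}^{-s}\pmod p,
\qquad
x_i\in\mathcal X.
\]
Comparing with the claimed bound raised to the power \(2\ell\), the right-hand side is \(M^{2\ell-1}\bigl(pH^{2\ell^2/(\ell+1)}+H^{2\ell}\bigr)p^{o(1)}\). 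It therefore suffices to prove
\[
J_\ell\ll\Bigl(H^{2\ell^2/(\ell+1)}+\frac{H^{2\ell}}{p}\Bigr)p^{o(1)},
\]
uniformly in the shift \(L\).

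Second, we prove this energy bound by the Bourgain--Garaev method for reciprocals of shifted intervals \cite{BG}. Write \(x_i=L+h_i\) with \(1\leq h_i\leq H\), and clear denominators. The congruence becomes \(P(L;h_1,\dots,h_{2\ell})\equiv0\pmod p\), where \(P\) is a polynomial in \(L\) whose coefficients are elementary symmetric-type expressions in the \(h_i\) and are of controlled size. We then split the range of \(H\) into blocks of length \(H^{1/(\ell+1)}\)-type scale. On each block, a lattice or pigeonhole argument (Dirichlet approximation to the residues \(L^j\bmod p\)) lifts the congruence to a genuine equation over \(\mathbb Z\), or to a bounded number of such equations. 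For an integer equation, the divisor bound gives \(p^{o(1)}\) solutions once all but a few variables are fixed. The exponent \(2\ell^2/(\ell+1)\) should come out of balancing the block length against the number of free variables. The term \(H^{2\ell}/p\) accounts for the trivial equidistribution contribution once \(H\) is large.

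We expect the main obstacle to be this lifting step, uniformly in an arbitrary shift \(L\). For \(L=0\) the equation is already essentially integral after multiplying through by \(\prod x_i^s\). For general \(L\), the powers of \(L\) must be simultaneously approximated. The first thing we would try is to follow \cite{BG}: choose an integer \(t\) of size at most \(p/T\) for which each \(tL^j\bmod p\) is small, and multiply the congruence by \(t\). This makes every term small enough that the congruence becomes an equality in \(\mathbb Z\). We would then optimize \(T\) against \(H\) to recover the stated exponent. Finally, substituting the bound for \(J_\ell\) into the H\"older inequality and taking \(2\ell\)-th roots gives the lemma.
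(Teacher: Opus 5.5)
Quoting \cite[Theorem~1.2]{GaraevShparlinski} is exactly what the paper does; it gives no proof of this lemma. So your opening sentence already matches the paper. The sketch you add, however, is not a proof. Its first step is correct and loses nothing relative to the target. H\"older in $m$ followed by completion gives $|S|^{2\ell}\le M^{2\ell-1}pJ_\ell$, and the stated bound is precisely what one obtains from $J_\ell\ll\bigl(H^{2\ell^2/(\ell+1)}+H^{2\ell}/p\bigr)p^{o(1)}$, uniformly in $L$. The whole content of the lemma is therefore this energy estimate, and that is the step you do not supply.

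The mechanism you propose for it breaks down, for $\ell\ge2$, in the range this paper needs.
\begin{itemize}
\item \emph{Lifting.} After clearing denominators, the congruence is a polynomial in $L$ of degree $D=s(2\ell-1)-1$, since the top coefficient cancels. Its coefficient of $L^j$ has size about $H^{D+1-j}$. Simultaneous Dirichlet approximation gives $t\le Q$ with $\|tL^j/p\|\le Q^{-1/D}$ for $1\le j\le D$. Making $|tP|<p$ then requires both $QH^{D+1}<p$ (constant term) and $H^{D}Q^{-1/D}<1$ (linear term). Together these force $H^{D^2+D+1}<p$.
\item \emph{The case $L=0$.} Even this base case is not ``essentially integral'': the cleared expression has size $H^{s(2\ell-1)}$. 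This exceeds $p$ for $s=1$, $\ell=3$, $H\approx p^{14/29}$, which is exactly the case used here.
\item \emph{Blocks.} Nor can you cut $\mathcal X$ into liftable blocks $\mathcal X_j$ of length $H_0$ and recombine. H\"older gives at best $J_\ell(\mathcal X)\le (H/H_0)^{2\ell}\max_j J_\ell(\mathcal X_j)$. The diagonal solutions force $J_\ell(\mathcal X_j)\ge H_0^{\ell}$, so you get nothing better than $H^{2\ell}H_0^{-\ell}$. This is at most $H^{2\ell^2/(\ell+1)}$ only if $H_0\ge H^{2/(\ell+1)}$, which is incompatible with $H_0^{D^2+D+1}<p$ when $H$ has size $p^{14/29}$.
\end{itemize}
So ``optimizing $T$ against $H$'' cannot yield the exponent $2\ell^2/(\ell+1)$; that needs the genuinely different argument behind the cited result. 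As a pointer to the literature your proposal is fine. As a self-contained proof it has a gap exactly where all the difficulty lies.
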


For \(\lambda\in\mathbb F_p^\ast\), put
\[
E_{\mathcal M}(\lambda)
=
E^+(\mathcal M,\lambda\mathcal M)
=
\#\left\{
(m_1,m_2,m_3,m_4)\in\mathcal M^4:
m_1-m_2=\lambda(m_3-m_4)
\right\}
\]
and
\[
D_{\mathcal M}(\lambda)
=
E_{\mathcal M}(\lambda)-\frac{M^4}{p}.
\]
Additive Fourier inversion shows that
\[
E_{\mathcal M}(\lambda)
=
\frac1p
\sum_{a\in\mathbb F_p}
\left|
\sum_{m\in\mathcal M}\mathrm e_p(am)
\right|^2
\left|
\sum_{m\in\mathcal M}\mathrm e_p(a\lambda m)
\right|^2,
\]
so in particular \(D_{\mathcal M}(\lambda)\geq0\).

The first estimate in the next lemma is the special case
\(A=B=\mathcal M\) of the elementary dilate-energy inequality in
\cite[Lemma~3]{MurphyPetridis}. The total-energy calculation behind
the second estimate goes back to Bourgain, Katz and Tao
\cite{BKT}; see also \cite{MurphyPetridis}. We include the short proof
because both forms are used below. More refined average estimates are
available under additional size hypotheses; see, for example,
\cite{GlibichukEnergy}.

\begin{lemma}\label{lem:dilate-energy}
For every \(\Lambda\subseteq\mathbb F_p^\ast\),
\[
\sum_{\lambda\in\Lambda}E_{\mathcal M}(\lambda)
\leq M^4+|\Lambda|M^2.
\]
Moreover,
\begin{equation}\label{eq:total-dilate-energy}
\sum_{\lambda\in\mathbb F_p^\ast}D_{\mathcal M}(\lambda)
=
pM^2\left(1-\frac Mp\right)^2.
\end{equation}
\end{lemma}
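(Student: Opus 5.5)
The plan is to prove both statements by direct counting and Fourier inversion. The key objects are the representation function \(r(u)=\#\{(m_1,m_2)\in\mathcal M^2:m_1-m_2=u\}\) and the sum \(\widehat{\mathcal M}(a)=\sum_{m\in\mathcal M}\mathrm e_p(am)\).

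\emph{First inequality.} Split the quadruples counted by \(E_{\mathcal M}(\lambda)\) according to whether \(m_3=m_4\). If \(m_3=m_4\), then \(m_1=m_2\), so these quadruples contribute exactly \(M^2\) for each \(\lambda\). Summed over \(\lambda\in\Lambda\), this gives \(|\Lambda|M^2\).

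If \(m_3\neq m_4\), then \(\lambda=(m_1-m_2)/(m_3-m_4)\) is determined by the quadruple. Hence each of the at most \(M^4\) quadruples with \(m_3\ne m_4\) is counted for at most one \(\lambda\in\Lambda\). (Quadruples with \(m_1=m_2\) and \(m_3\neq m_4\) would force \(\lambda=0\notin\mathbb F_p^\ast\), so they are not counted at all.) Adding the two contributions gives \(\sum_{\lambda\in\Lambda}E_{\mathcal M}(\lambda)\le M^4+|\Lambda|M^2\).

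\emph{Identity \eqref{eq:total-dilate-energy}.} I would compute \(\sum_{\lambda\in\mathbb F_p^\ast}E_{\mathcal M}(\lambda)\) exactly, using the same split.
\begin{itemize}
\item Quadruples with \(m_3=m_4\) and \(m_1=m_2\) contribute \(M^2\) for each of the \(p-1\) values of \(\lambda\), giving \((p-1)M^2\).
\item Quadruples with \(m_3\neq m_4\) and \(m_1\neq m_2\) each determine exactly one \(\lambda\in\mathbb F_p^\ast\), giving \((M^2-M)^2\).
\item Mixed quadruples, with exactly one of \(m_1=m_2\), \(m_3=m_4\), would force \(\lambda\in\{0,\infty\}\), so they contribute nothing.
\end{itemize}
Therefore
\[
\sum_{\lambda\in\mathbb F_p^\ast}E_{\mathcal M}(\lambda)=(p-1)M^2+M^2(M-1)^2 .
\]
Subtracting \((p-1)M^4/p\) gives
\[
\sum_{\lambda\in\mathbb F_p^\ast}D_{\mathcal M}(\lambda)=pM^2-M^2+M^4-2M^3+M^2-M^4+\frac{M^4}{p}=pM^2-2M^3+\frac{M^4}{p}=pM^2\left(1-\frac Mp\right)^2 .
\]

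As a cross-check, the same identity follows from the Fourier formula. Summing over \(\lambda\in\mathbb F_p^\ast\) and isolating the frequency \(a=0\), which contributes \(M^4(p-1)/p\), leaves
\[
\frac1p\sum_{a\ne0}|\widehat{\mathcal M}(a)|^2\sum_{\lambda\ne0}|\widehat{\mathcal M}(a\lambda)|^2=\frac1p\left(\sum_{b\ne0}|\widehat{\mathcal M}(b)|^2\right)^2=\frac1p(pM-M^2)^2 ,
\]
in agreement with the combinatorial computation. Here we use that \(b=a\lambda\) runs over \(\mathbb F_p^\ast\) for each fixed \(a\ne0\), together with Parseval.

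There is no real obstacle. The only delicate point is the bookkeeping of degenerate quadruples, namely the mixed cases that would force \(\lambda=0\) and are therefore excluded by the restriction \(\lambda\in\mathbb F_p^\ast\).
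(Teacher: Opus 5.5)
Your proof is correct and follows the paper's argument essentially verbatim: the inequality comes from splitting according to whether \(m_3=m_4\), and the identity comes from the exact count \((p-1)M^2+(M^2-M)^2\) followed by subtracting \((p-1)M^4/p\). Your Fourier cross-check via Parseval, with \(b=a\lambda\) running over \(\mathbb F_p^\ast\), is a valid independent derivation that the paper does not give, and your explicit handling of the mixed degenerate quadruples spells out a step the paper leaves implicit.
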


\begin{proof}
The first sum counts solutions of
\[
m_1-m_2=\lambda(m_3-m_4),
\qquad
\lambda\in\Lambda.
\]
If \(m_3=m_4\), then \(m_1=m_2\), giving
\(|\Lambda|M^2\) solutions. Otherwise the quadruple
\((m_1,m_2,m_3,m_4)\) determines at most one value of \(\lambda\).
This proves the first assertion.

For the second, summing \(E_{\mathcal M}(\lambda)\) over
\(\lambda\neq0\), the solutions with both differences equal to zero
contribute \((p-1)M^2\). If both differences are nonzero, the
quadruple determines a unique \(\lambda\in\mathbb F_p^\ast\), and
there are \((M^2-M)^2\) such quadruples. Thus
\[
\sum_{\lambda\neq0}E_{\mathcal M}(\lambda)
=
(p-1)M^2+(M^2-M)^2.
\]
Subtracting \((p-1)M^4/p\) gives
\[
pM^2-2M^3+\frac{M^4}{p}
=
pM^2\left(1-\frac Mp\right)^2,
\]
as required.
\end{proof}

\section{A centered fourth-moment estimate}\label{sec:fourth}

We now prove Theorem~\ref{thm:fourth-moment}. The main argument is
first carried out when \(H\leq p^{1/2}\).

Define
\[
r(t)
=
\#\left\{
(m,x)\in\mathcal M\times\mathcal X:
mx^{-s}=t
\right\}.
\]
Then
\[
S_{\mathcal M,\mathcal X}(a)
=
\sum_{t\in\mathbb F_p}r(t)\mathrm e_p(at).
\]
Write
\[
(r\circ r)(z)
=
\sum_{t\in\mathbb F_p}r(t)r(t-z).
\]
By additive Fourier orthogonality,
\[
\frac1p
\sum_{a\in\mathbb F_p}
|S_{\mathcal M,\mathcal X}(a)|^4
=
\sum_{z\in\mathbb F_p}(r\circ r)(z)^2.
\]
Since \(S_{\mathcal M,\mathcal X}(0)=HM\) and
\(\sum_z(r\circ r)(z)=H^2M^2\), we obtain
\begin{equation}\label{eq:centered-fourth}
\frac1p
\sum_{a\in\mathbb F_p^\ast}
|S_{\mathcal M,\mathcal X}(a)|^4
=
\sum_{z\in\mathbb F_p}
\left(
(r\circ r)(z)-\frac{H^2M^2}{p}
\right)^2.
\end{equation}

For \(x,y\in\mathcal X\), define
\[
\nu_{x,y}(z)
=
\#\left\{
(m,n)\in\mathcal M^2:
mx^{-s}-ny^{-s}=z
\right\}
\]
and
\[
F_{x,y}(z)
=
\nu_{x,y}(z)-\frac{M^2}{p}.
\]
Since
\[
r\circ r=\sum_{x,y\in\mathcal X}\nu_{x,y},
\]
equation \eqref{eq:centered-fourth} and Minkowski's inequality give
\begin{equation}\label{eq:minkowski-reduction}
\left(
\frac1p
\sum_{a\in\mathbb F_p^\ast}
|S_{\mathcal M,\mathcal X}(a)|^4
\right)^{1/2}
\leq
\sum_{x,y\in\mathcal X}\|F_{x,y}\|_2.
\end{equation}

The equality
\[
m_1x^{-s}-n_1y^{-s}
=
m_2x^{-s}-n_2y^{-s}
\]
is equivalent to
\[
m_1-m_2
=
(x/y)^s(n_1-n_2).
\]
Consequently,
\[
\sum_{z\in\mathbb F_p}\nu_{x,y}(z)^2
=
E_{\mathcal M}\left((x/y)^s\right),
\]
and, since \(\sum_z\nu_{x,y}(z)=M^2\),
\[
\|F_{x,y}\|_2^2
=
D_{\mathcal M}\left((x/y)^s\right).
\]
Grouping the pairs \((x,y)\) according to \((x/y)^s\), we deduce
from \eqref{eq:minkowski-reduction} that
\begin{equation}\label{eq:ratio-energy-reduction}
\left(
\frac1p
\sum_{a\in\mathbb F_p^\ast}
|S_{\mathcal M,\mathcal X}(a)|^4
\right)^{1/2}
\leq
\sum_{\lambda\in\mathbb F_p^\ast}
q_s(\lambda)D_{\mathcal M}(\lambda)^{1/2}.
\end{equation}

Assume now that \(H\leq p^{1/2}\), and put
\[
\mu=\min\{M,p^{1/2}\}.
\]
For dyadic \(R\geq1\), let
\[
\Lambda_R
=
\left\{
\lambda\in\mathbb F_p^\ast:
R\leq q_s(\lambda)<2R
\right\}.
\]
Lemma~\ref{lem:ratio-moment} gives
\[
|\Lambda_R|
\ll_s
\frac{H^2}{R^2}p^{o(1)}.
\]
We claim that
\[
\sum_{\lambda\in\Lambda_R}
D_{\mathcal M}(\lambda)
\leq
M^2\left(\mu^2+|\Lambda_R|\right).
\]
Indeed, if \(M\leq p^{1/2}\), the first assertion of
Lemma~\ref{lem:dilate-energy} yields
\[
\sum_{\lambda\in\Lambda_R}D_{\mathcal M}(\lambda)
\leq
M^4+|\Lambda_R|M^2.
\]
If \(M>p^{1/2}\), then nonnegativity and
\eqref{eq:total-dilate-energy} give
\[
\sum_{\lambda\in\Lambda_R}D_{\mathcal M}(\lambda)
\leq pM^2=M^2\mu^2.
\]

By Cauchy--Schwarz on each dyadic block,
\[
\begin{aligned}
\sum_{\lambda\in\Lambda_R}
q_s(\lambda)D_{\mathcal M}(\lambda)^{1/2}
&\ll
R|\Lambda_R|^{1/2}
\left(
\sum_{\lambda\in\Lambda_R}
D_{\mathcal M}(\lambda)
\right)^{1/2} \\
&\ll_s
HM\mu\,p^{o(1)}
+
\frac{H^2M}{R}p^{o(1)}.
\end{aligned}
\]
For each fixed \(y\in\mathcal X\), the equation
\(x^s=\lambda y^s\) has at most \(\gcd(s,p-1)\leq s\) solutions in
\(\mathbb F_p^\ast\). Hence \(q_s(\lambda)\leq sH\), so there are only
\(O_s(\log p)\) nonempty dyadic levels. Summing over them gives
\begin{equation}\label{eq:ratio-energy-bound}
\sum_{\lambda\in\mathbb F_p^\ast}
q_s(\lambda)D_{\mathcal M}(\lambda)^{1/2}
\ll_s
HM(H+\mu)p^{o(1)}.
\end{equation}
Combining \eqref{eq:ratio-energy-reduction} and
\eqref{eq:ratio-energy-bound}, we obtain
\begin{equation}\label{eq:fourth-short}
\sum_{a\in\mathbb F_p^\ast}
|S_{\mathcal M,\mathcal X}(a)|^4
\ll_s
pH^2M^2(H+\mu)^2p^{o(1)}
\end{equation}
for \(H\leq p^{1/2}\).

It remains to treat \(H>p^{1/2}\). Partition \(\mathcal X\) into
disjoint consecutive subintervals
\[
\mathcal X
=
\mathcal X_1\sqcup\cdots\sqcup\mathcal X_K,
\qquad
|\mathcal X_j|\leq p^{1/2},
\qquad
K\ll Hp^{-1/2}.
\]
Writing
\[
S_{\mathcal M,\mathcal X}
=
\sum_{j=1}^K S_{\mathcal M,\mathcal X_j},
\]
we apply \eqref{eq:fourth-short} to each \(\mathcal X_j\). Since
\(|\mathcal X_j|\leq p^{1/2}\) and \(\mu\leq p^{1/2}\),
\[
\|S_{\mathcal M,\mathcal X_j}\|_4
\ll_s
p^{3/4}M^{1/2}p^{o(1)}.
\]
Minkowski's inequality therefore gives
\[
\|S_{\mathcal M,\mathcal X}\|_4
\ll_s
Hp^{1/4}M^{1/2}p^{o(1)}.
\]
Hence
\[
\sum_{a\in\mathbb F_p^\ast}
|S_{\mathcal M,\mathcal X}(a)|^4
\ll_s
pH^4M^2p^{o(1)}.
\]
As \(H>p^{1/2}\geq\mu\), the last expression is
\[
\ll_s
pH^2M^2(H+\mu)^2p^{o(1)}.
\]
This completes the proof of Theorem~\ref{thm:fourth-moment}.

\section{Five-term and higher congruences}\label{sec:congruences}

For \(1\leq i\leq r\), put
\[
S_i(a)
=
\sum_{m\in\mathcal M_i}
\sum_{x\in\mathcal X_i}
\mathrm e_p(amx^{-s}).
\]
By additive-character orthogonality,
\[
T_r(\lambda)-\frac{(HM)^r}{p}
=
\frac1p
\sum_{a\in\mathbb F_p^\ast}
S_1(a)\cdots S_r(a)\mathrm e_p(-a\lambda).
\]

\begin{proposition}\label{prop:master-error}
Let \(r\geq5\) and \(\ell\geq1\) be fixed. With
\(\mu=\min\{M,p^{1/2}\}\), uniformly for
\(\lambda\in\mathbb F_p\),
\begin{equation}\label{eq:master-error}
\left|
T_r(\lambda)-\frac{(HM)^r}{p}
\right|
\ll_{s,r,\ell}
H^{r-2}M^{r-2}(H+\mu)^2
\left(
\frac{p}{MH^{2\ell/(\ell+1)}}+\frac1M
\right)^{(r-4)/(2\ell)}
p^{o(1)}.
\end{equation}
Consequently,
\begin{equation}\label{eq:relative-master}
\frac{
\left|T_r(\lambda)-(HM)^r/p\right|
}{
(HM)^r/p
}
\ll_{s,r,\ell}
\frac{p(H+\mu)^2}{H^2M^2}
\left(
\frac{p}{MH^{2\ell/(\ell+1)}}+\frac1M
\right)^{(r-4)/(2\ell)}
p^{o(1)}.
\end{equation}
\end{proposition}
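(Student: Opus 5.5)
Starting from the orthogonality identity just displayed, we bound the error by
\[
\left|T_r(\lambda)-\frac{(HM)^r}{p}\right|
\leq
\frac1p\sum_{a\in\mathbb F_p^\ast}\prod_{i=1}^r|S_i(a)|,
\]
which no longer involves \(\lambda\), so the required uniformity in \(\lambda\) is automatic. We then separate the product into four factors, handled through a fourth moment, and the remaining \(r-4\) factors, handled pointwise:
\[
\prod_{i=1}^r|S_i(a)|
\leq
\Bigl(\max_{b\in\mathbb F_p^\ast}\prod_{i=5}^r|S_i(b)|\Bigr)
\prod_{i=1}^4|S_i(a)|.
\]
This is legitimate since \(r\geq5\) means at least one factor is treated pointwise.

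For the pointwise part, we apply Lemma~\ref{lem:pointwise} to each \(S_i\), \(5\leq i\leq r\), with the fixed parameter \(\ell\). Since all \(\mathcal M_i\) have cardinality \(M\) and all \(\mathcal X_i\) have length \(H\), each factor is bounded by the same quantity uniformly in \(b\ne0\). This gives
\[
\max_{b\neq0}\prod_{i=5}^r|S_i(b)|
\ll
(HM)^{r-4}\Delta^{(r-4)/(2\ell)}p^{o(1)},
\qquad
\Delta=\frac{p}{MH^{2\ell/(\ell+1)}}+\frac1M.
\]
The product of the \(r-4\) factors \(p^{o(1)}\) is still \(p^{o(1)}\), because \(r\) is fixed.

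For the four remaining factors, we use Hölder's inequality with four equal exponents:
\[
\sum_{a\neq0}\prod_{i=1}^4|S_i(a)|
\leq
\prod_{i=1}^4\Bigl(\sum_{a\neq0}|S_i(a)|^4\Bigr)^{1/4}.
\]
Theorem~\ref{thm:fourth-moment} applies to each pair \((\mathcal M_i,\mathcal X_i)\) uniformly in the shifts \(L_i\). It bounds each fourth moment by \(pH^2M^2(H+\mu)^2p^{o(1)}\), so the product is at most \(pH^2M^2(H+\mu)^2p^{o(1)}\).

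Multiplying the two estimates, the factor \(p\) cancels the \(1/p\) in front. This leaves
\[
H^{r-2}M^{r-2}(H+\mu)^2\Delta^{(r-4)/(2\ell)}p^{o(1)},
\]
which is \eqref{eq:master-error}. Dividing by \((HM)^r/p\) gives \eqref{eq:relative-master} immediately.

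The argument is routine once the two inputs are available. The only step needing care is that the \(p^{o(1)}\) factors and implied constants from Theorem~\ref{thm:fourth-moment} and Lemma~\ref{lem:pointwise} are uniform in the sets, the shifts and the frequency. That uniformity is stated in both results and in the conventions of the introduction. No restriction \(H\leq p^{1/2}\) is needed here, since Theorem~\ref{thm:fourth-moment} covers all \(H\).
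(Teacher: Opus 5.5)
Your proposal is correct and follows essentially the same route as the paper. The paper also bounds the error by the sum over nonzero frequencies, extracts $W^{r-4}$ with $W=\max_{i,\,a\neq0}|S_i(a)|$ controlled by Lemma~\ref{lem:pointwise}, and handles the remaining four factors by H\"older's inequality and Theorem~\ref{thm:fourth-moment}; you have just written out the H\"older step and the $p^{o(1)}$ bookkeeping explicitly.
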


\begin{proof}
Let
\[
W
=
\max_{\substack{1\leq i\leq r\\a\in\mathbb F_p^\ast}}
|S_i(a)|.
\]
H\"older's inequality and Theorem~\ref{thm:fourth-moment} give
\[
\left|
T_r(\lambda)-\frac{(HM)^r}{p}
\right|
\ll_{s,r}
W^{r-4}H^2M^2(H+\mu)^2p^{o(1)}.
\]
Applying Lemma~\ref{lem:pointwise} to \(W\) yields
\eqref{eq:master-error}; division by \((HM)^r/p\) gives
\eqref{eq:relative-master}.
\end{proof}

\begin{corollary}\label{cor:r-unbalanced}
Fix \(r\geq5\), \(\ell\geq1\), \(s\geq1\) and
\(\varepsilon>0\). Suppose that \(H\leq p^{1/2}\) and
\[
H^{(2\ell+r-2)/(\ell+1)}
M^{2+(r-4)/(2\ell)}
>
p^{1+(r-4)/(2\ell)+\varepsilon}
\left(H+\min\{M,p^{1/2}\}\right)^2.
\]
Then, for some
\(\delta=\delta(s,r,\ell,\varepsilon)>0\),
\[
T_r(\lambda)
=
\frac{(HM)^r}{p}
\left(1+O(p^{-\delta})\right)
\]
uniformly for \(\lambda\in\mathbb F_p\).
\end{corollary}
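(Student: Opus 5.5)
We derive the corollary directly from the relative error bound \eqref{eq:relative-master} of Proposition~\ref{prop:master-error}. The aim is to show that the right-hand side of \eqref{eq:relative-master} is \(O(p^{-\delta})\) under the stated hypothesis. Write
\[
Q=\frac{p}{MH^{2\ell/(\ell+1)}}+\frac1M,
\qquad
k=\frac{r-4}{2\ell},
\]
so that the relative error is \(\ll p(H+\mu)^2H^{-2}M^{-2}Q^{k}p^{o(1)}\).

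First we dispose of the term \(1/M\) in \(Q\). Since \(H\leq p^{1/2}\), we have \(H^{2\ell/(\ell+1)}\leq H^2\leq p\). Hence
\[
\frac{p}{MH^{2\ell/(\ell+1)}}\geq\frac1M,
\qquad
Q\leq\frac{2p}{MH^{2\ell/(\ell+1)}}.
\]
This is the one place where the assumption \(H\leq p^{1/2}\) enters, beyond its role inside Proposition~\ref{prop:master-error}. Substituting, the relative error is
\[
\ll p^{1+k}(H+\mu)^2\,H^{-2-2\ell k/(\ell+1)}\,M^{-2-k}\,p^{o(1)}.
\]

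Next we check the exponent of \(H\):
\[
2+\frac{2\ell k}{\ell+1}=2+\frac{r-4}{\ell+1}=\frac{2\ell+r-2}{\ell+1},
\]
which matches the hypothesis. The exponent of \(M\) is \(2+(r-4)/(2\ell)\), and the power of \(p\) is \(1+(r-4)/(2\ell)\); both also match. So the hypothesis says exactly that this quantity is at most \(p^{-\varepsilon}p^{o(1)}\).

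Finally, we take the \(p^{o(1)}\) factor to be \(O(p^{\varepsilon/2})\), which is allowed by the convention on uniform \(p^{o(1)}\). This gives a relative error of \(O(p^{-\varepsilon/2})\), uniformly in \(\lambda\), so \(\delta=\varepsilon/2\) works. The main (though mild) obstacle is the bookkeeping: absorbing \(1/M\) into the first term of \(Q\), and verifying that the exponents match exactly. The rest is immediate from \eqref{eq:relative-master}.
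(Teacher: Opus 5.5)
Your proposal is correct and follows the paper's proof essentially verbatim: you use $H\le p^{1/2}$ to absorb the $1/M$ term in \eqref{eq:relative-master}, match the exponents with $k=(r-4)/(2\ell)$, and read off a power saving from the hypothesis. One small correction: Proposition~\ref{prop:master-error} itself needs no restriction on $H$, because Theorem~\ref{thm:fourth-moment} covers all $H$, so $H\le p^{1/2}$ is used only in the absorption step.
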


\begin{proof}
Put \(q=(r-4)/(2\ell)\). Since \(H\leq p^{1/2}\),
\[
\frac{p}{MH^{2\ell/(\ell+1)}}\geq\frac1M.
\]
Thus \eqref{eq:relative-master} gives
\[
\frac{
\left|T_r(\lambda)-(HM)^r/p\right|
}{
(HM)^r/p
}
\ll_{s,r,\ell}
\frac{
p^{1+q}(H+\mu)^2
}{
H^{(2\ell+r-2)/(\ell+1)}
M^{2+q}
}
p^{o(1)}.
\]
The hypothesis gives a fixed power saving.
\end{proof}

\begin{proof}[Proof of Theorem~\ref{thm:five-main}]
Take \(r=5\) and \(\ell=3\) in
Proposition~\ref{prop:master-error}. Since \(H\leq p^{1/2}\),
\[
\frac{p}{MH^{3/2}}\geq\frac1M,
\]
and hence \eqref{eq:relative-master} gives
\[
\frac{
\left|T_5(\lambda)-H^5M^5/p\right|
}{
H^5M^5/p
}
\ll_s
\frac{
p^{7/6}(H+\mu)^2
}{
H^{9/4}M^{13/6}
}
p^{o(1)}.
\]
The twelfth power of the factor preceding \(p^{o(1)}\) is
\[
\frac{
p^{14}(H+\mu)^{24}
}{
H^{27}M^{26}
}.
\]
The hypothesis of Theorem~\ref{thm:five-main} therefore implies
\[
\frac{
\left|T_5(\lambda)-H^5M^5/p\right|
}{
H^5M^5/p
}
\ll_s
p^{-\varepsilon/12+o(1)},
\]
which proves the theorem.
\end{proof}

\begin{proof}[Proof of Corollary~\ref{cor:five-balanced}]
First suppose that \(N\leq p^{1/2}\). Then \(\mu=N\), and the proof of
Theorem~\ref{thm:five-main} gives
\[
\frac{
\left|T_5(\lambda)-N^{10}/p\right|
}{
N^{10}/p
}
\ll_s
p^{7/6+o(1)}N^{-29/12}.
\]
The hypothesis \(N>p^{14/29+\varepsilon}\) therefore gives a fixed
power saving.

Suppose now that \(N\geq p^{1/2}\). From
\eqref{eq:relative-master}, with \(r=5\) and \(\ell=3\),
\[
\frac{
\left|T_5(\lambda)-N^{10}/p\right|
}{
N^{10}/p
}
\ll_s
pN^{-2}
\left(
\frac{p}{N^{5/2}}+\frac1N
\right)^{1/6}
p^{o(1)}.
\]
Using \((u+v)^{1/6}\ll u^{1/6}+v^{1/6}\), the right-hand side is
\[
\ll_s
\left(
p^{7/6}N^{-29/12}
+
pN^{-13/6}
\right)p^{o(1)}.
\]
For \(N\geq p^{1/2}\), these terms are respectively at most
\(p^{-1/24}\) and \(p^{-1/12}\). This proves the result.
\end{proof}

\section{Optimization in the balanced case}\label{sec:optimization}

We now set \(H=M=N\) and optimize the parameter \(\ell\).

Suppose first that \(N\leq p^{1/2}\). Since \(\mu=N\),
Corollary~\ref{cor:r-unbalanced} shows that it is enough to have
\[
N^{
(2\ell+r-2)/(\ell+1)
+(r-4)/(2\ell)
}
>
p^{1+(r-4)/(2\ell)+\varepsilon}.
\]
Thus the critical exponent is
\begin{equation}\label{eq:alpha}
\alpha_{r,\ell}
=
\frac{
(\ell+1)(2\ell+r-4)
}{
4\ell^2+(3r-8)\ell+r-4
}.
\end{equation}
More precisely, if \(N>p^{\alpha_{r,\ell}+\varepsilon}\), then the
left-hand side of the preceding sufficient condition exceeds its
right-hand side by a fixed power of \(p\), after replacing the
\(\varepsilon\) occurring in Corollary~\ref{cor:r-unbalanced} by a
suitable positive multiple depending only on \(r\) and \(\ell\).
Direct calculation gives
\[
\alpha_{r,\ell+1}-\alpha_{r,\ell}
=
-\frac{
2(r-4)(r-\ell^2+\ell-2)
}{
\bigl(4\ell^2+(3r-8)\ell+r-4\bigr)
\bigl(4\ell^2+3r\ell+4r-8\bigr)
}.
\]
Thus \(\alpha_{r,\ell}\) decreases while
\(r>\ell^2-\ell+2\) and increases once the reverse strict inequality
holds. Therefore an optimal integer parameter is
\[
\ell_r
=
\left\lceil
\frac{1+\sqrt{4r-7}}2
\right\rceil.
\]
If \(r=\ell_r^2-\ell_r+2\), then \(\ell_r\) and \(\ell_r+1\) are
tied.

It remains only to remove the temporary restriction
\(N\leq p^{1/2}\). For \(N\geq p^{1/2}\), put
\[
q=\frac{r-4}{2\ell_r},
\qquad
\theta=\frac{2\ell_r}{\ell_r+1}.
\]
Since \(r\geq5\), one has \(\ell_r\geq3\), hence \(\theta>1\).
From \eqref{eq:relative-master},
\[
\frac{
\left|T_r(\lambda)-N^{2r}/p\right|
}{
N^{2r}/p
}
\ll_{s,r}
pN^{-2}
\left(
\frac{p}{N^{1+\theta}}+\frac1N
\right)^q
p^{o(1)}.
\]
Here \(pN^{-2}\leq1\), while
\[
\frac{p}{N^{1+\theta}}
\leq
p^{-(\theta-1)/2},
\qquad
\frac1N\leq p^{-1/2}.
\]
Hence the relative error is \(O_{s,r}(p^{-c_r+o(1)})\) for some
\(c_r>0\), proving the asymptotic formula throughout
\(N\geq p^{1/2}\).

Finally, from \eqref{eq:alpha},
\[
\alpha_{r,\ell}-\frac13
=
\frac{
2(\ell^2+\ell+r-4)
}{
3\bigl(4\ell^2+(3r-8)\ell+r-4\bigr)
}.
\]
Since \(\ell_r=\sqrt r+O(1)\), the numerator is
\(4r+O(\sqrt r)\) and the denominator is
\(9r^{3/2}+O(r)\). Therefore
\[
\alpha_r
=
\frac13+\frac{4}{9\sqrt r}+O(r^{-1}),
\]
which completes the proof of Theorem~\ref{thm:r-balanced}.

\end{document}